\newtheorem{theorem}{Theorem}[section]
\newtheorem{corollary}[theorem]{Corollary}
\newtheorem{remark}{Remark}
\newtheorem{example}{Example}
\newcommand{\subscr}[2]{{#1}_{\textup{#2}}}
\newcommand{\until}[1]{\{1,\dots,#1\}}
\newcommand{\Ker}{\operatorname{Ker}}
\newcommand{\Image}{\operatorname{Im}}
\newcommand{\real}{\mathbb{R}}
\newcommand{\transpose}{\mathsf{T}} 
\newcommand{\mc}{\mathcal}
\DeclareSymbolFont{bbold}{U}{bbold}{m}{n}
\DeclareSymbolFontAlphabet{\mathbbold}{bbold}
\newcommand\oprocendsymbol{\hbox{$\square$}}
\newcommand\oprocend{\relax\ifmmode\else\unskip\hfill\fi\oprocendsymbol}
\newcommand*{\QEDA}{\hfill\ensuremath{\blacksquare}}%
\begin{document}
\title{\bf Learning Minimum-Energy Controls from Heterogeneous Data}
\author{Giacomo Baggio and Fabio Pasqualetti \thanks{This material is
    based upon work supported in part by awards ARO 71603NSYIP, ARO
    W911NF-18-1-0213, and AFOSR FA9550-19-1-0235. Giacomo Baggio is with the Department of Information Engineering, University of Padova, Italy,  e-mail: \href{mailto:baggio@dei.unipd.it}{\texttt{baggio@dei.unipd.it}}. 
    Fabio~Pasqualetti is with the Department of Mechanical
    Engineering, University of California at Riverside,  e-mail: \href{mailto:fabiopas@engr.ucr.edu}{\texttt{fabiopas@engr.ucr.edu.}}}
}
\maketitle


\begin{abstract} 
  In this paper we study the problem of learning minimum-energy
  controls for linear systems from heterogeneous data. Specifically,
  we consider datasets comprising input, initial and final state
  measurements collected using experiments with different time
  horizons and arbitrary initial conditions. In this setting, we first
  establish a general representation of input and sampled state
  trajectories of the system based on the available data. Then, we
  leverage this data-based representation to derive closed-form
  data-driven expressions of minimum-energy controls for a wide range
  of control horizons. Further, we characterize the minimum number of
  data required to reconstruct the minimum-energy inputs, and discuss
  the numerical properties of our expressions. Finally, we investigate
  the effect of noise on our data-driven formulas, and, in the case of
  noise with known second-order statistics, we provide corrected
  expressions that converge asymptotically to the
  true~optimal~control~inputs.
\end{abstract}

\section{Introduction}\label{sec: introduction}
The availability of large volumes of freely accessible data and the
recent advances in machine learning and artificial intelligence are
revolutionizing many areas of science and engineering. These include
control and system theory, in which direct data-driven control design
has recently been recognized as an appealing (and sometimes
preferable) alternative to the classic model-based paradigm
\cite{CDP-PT:19,JC-JL-FD:18,HJW-JE-HLT-MKC:19,JB-FA:19,GRGS-ASB-CL-LC:19,GB-VK-FP:19}. In
particular, learning controls directly from data turns out to be
beneficial when an accurate model of the system is difficult or
expensive to obtain from first principles, or when system
identification leads to significant errors or excessive computational
costs in the reconstruction of the~desired~control.

Several direct data-driven control design approaches have been proposed and
analyzed in the literature (see \cite{ZSH-ZW:13} for an overview of
recent results). These differ in the class of dynamics, control
objective, and data collection, and include, among others, (model-free)
reinforcement learning \cite{BR:18}, iterative learning
control \cite{DAB-MT-AGA:06}, adaptive control
\cite{KJA-BW:73}, and behavior- or subspace-based methods
\cite{IM-PR:08,GRGS-ASB-CL-LC:19,CDP-PT:19}.

In this paper, we focus on learning the minimum-energy control input
driving a linear system from an initial state to a desired target
one. We show that this control input can be exactly reconstructed from
data consisting of heterogeneous and, in certain cases, noisy
measurements of system trajectories. In particular, we establish
closed-form data-driven expressions of minimum-energy controls for
noiseless and noisy data. Besides further supporting the intriguing
idea that data-driven control represents a viable alternative to
model-based control, our framework and results offer a different,
attractive perspective on many problems in network analysis and
control. In fact, (model-based) minimum-energy controls have been
extensively employed for controlling, and characterizing the control
performance of, large-scale networks governed by linear dynamics,
e.g., see \cite{FP-SZ-FB:13q,THS-FLC-JL:16,NB-GB-SZ:17}.

\noindent \textbf{Related work.} 
{\color{black}The data-driven framework employed in this paper is similar to the one of \cite{CDP-PT:19,JB-FA:19,HJW-JE-HLT-MKC:19}, which can, in turn, be viewed as a state-space adaptation of the behavioral setting described in, e.g., \cite{JCW-PR-IM-BLMDM:05,IM-PR:08,JC-JL-FD:18}.
These works} exploit a data-based representation of the system in terms
of data that typically consist of uninterrupted samples of a single,
noiseless, and sufficiently long input-output trajectory. Here,
instead, we consider data collected from system trajectories with possibly
different time horizons and initial conditions. Further, under some
assumptions on the noise model, we {\color{black} establish asymptotic results for} case of data
corrupted by noise. Finally, besides our earlier work
\cite{GB-VK-FP:19}, we are not aware of data-driven approaches
tailored to minimum-energy~controls.


\noindent \textbf{Contribution.} The contributions of this paper are
threefold. First, we provide a data-based representation of sampled
system trajectories based on data comprising input, initial and final
state measurements collected via control experiments with different
time horizons, arbitrary inputs and initial conditions. Second, based
on these data, we establish two equivalent closed-form expressions of
the minimum-energy control input to reach a desired target
state. Differently from \cite{GB-VK-FP:19}, our expressions can be
used to compute minimum-energy controls for a wide range of control
times, and, in particular, for times that are determined
  only by the experimental data and that can exceed the largest time
  horizon of the collected experiments. Further, we discuss the numerical
properties of our data-driven expressions, and the minimum number of
data required to correctly reconstruct the minimum-energy control
inputs. Third and finally, in the case of data corrupted by noise with
known second-order statistics, we propose corrected data-driven
control expressions, and show that these converge to the true control
inputs in the limit of~infinite~data.


\noindent \textbf{Organization.} The rest of the paper is organized as
follows. In Section \ref{sec:sys-and-data}, we illustrate the class of systems  and data collection
setting considered in this paper. In Section \ref{sec:repres}, we establish a data-based parameterization
of sampled system trajectories. In Section \ref{sec:min-en} and
\ref{sec:noise}, we present and discuss data-driven expressions of
minimum-energy controls for the case of noiseless and noisy data,
respectively. Finally, Section \ref{sec:conc} contains some concluding remarks and future directions.

\noindent \textbf{Notation.} Given a matrix $A\in\real^{p\times q}$,
we let $\Ker(A)$ and $A^{\dag}$ denote the kernel and Moore--Penrose
pseudoinverse of $A$, respectively. We let $0_{n,m}$ and $I_n$ denote
the $n\times m$ zero matrix (we simply write $0_n$ if $m=n$) and
$n\times n$ identity matrix, respectively. We will omit the subscripts
when the dimensions are clear from the context. Further, we denote
with $K_{A}$ the matrix whose columns form a basis of~$\Ker(A)$.

\section{System dynamics and available data}\label{sec:sys-and-data}
Consider a discrete-time linear time-invariant system
\begin{align}\label{eq:sys}
  x(t+1)=Ax(t)+Bu(t), \quad x(0)=x_{0}\in\real^{n},
\end{align}
where $x(t)\in\real^{n}$ and $u(t)\in\real^{m}$ are the state and
input of the system at time $t$, and $A\in\real^{n\times n}$ and
$B\in\real^{n\times m}$ are the state and input matrices,
respectively. Let $C_{T}=[B \ \ AB \ \ \cdots \ \ A^{T-1} B\big]$
denote the $T$-steps controllability matrix of the system
\eqref{eq:sys}. We assume that $A$ and $B$ are unknown, and that a set
of control experiments with the system \eqref{eq:sys} has been
conducted for control purposes. Each control experiment consists of
(i) generating a $T$-steps input sequence
$u_{T}=[u(T-1)^{\transpose}, \dots, u(0)^{\transpose}]^{\transpose}
\in \real^{mT}$, and (ii) measuring the state of the system with input
$u_{T}$ at time $t=0$, namely $x(0)$, and at time $t=T$, namely,
\begin{align}
	x(T) = A^{T}x(0)+C_{T} u_{T}.
\end{align}
We assume that the control experiments have been performed using
$M$ distinct time horizons $T_{i}\in\mathbb{N}$, $i\in\until{M}$,
and we divide the available data in sets $(U_{i},X_{0,i},X_{i})$,
$i\in\until{M}$, {\color{black}where the $i$-th set contains $N_{i}$ experiments, and} $U_{i}\in\real^{mT_{i}\times N_{i}}$,
$X_{0,i}\in\real^{n\times N_{i}}$, and $X_{i}\in\real^{n\times N_{i}}$
denote the matrices whose columns contain, respectively, the input
sequences with horizon $T_i$, the initial states of the experiments, and the final state measurements recorded at time
$T_{i}$. We let $\mc D=\{(U_{i},X_{0,i},X_{i})\}_{i=1}^{M}$~denote~the~set~of~all~available~data.

We stress that, equivalently, $\mc D$ may comprise measurements that
have (intermittently) been recorded from a sufficiently long
experiment or from several short and independent ones (possibly
performed using different initializations). The first scenario is
quite standard for system identification \cite{PVO-BLDM:12}
and behavior-based control \cite{CDP-PT:19}, where
data typically consist of a single system trajectory (the case of
missing observations has been analyzed in a limited number of works, e.g., see \cite{IM:17}). The second experimental scenario has
recently been considered in \cite{SD-HM-NM-BR-ST:18,GB-VK-FP:19},
under the more restrictive assumption that the initial state is the same~for~all~experiments.  

\section{Data-based representation of sampled system trajectories}\label{sec:repres}
Consider a sequence of (possibly repeated) indices
$k_{1},\dots,k_{\ell}\in\{1,\dots, M\}$, and let
$T=\sum_{i=1}^{\ell} T_{k_{i}}$. Further,~let
\begin{align*}
  x_{k_{1},\dots,k_{\ell}}=\left[x(0)^{\transpose},
  x(T_{k_{1}})^{\transpose},
  x(T_{k_{1}}+T_{k_{2}})^{\transpose},\dots,x
  \left(T\right)^{\transpose} \right]^{\transpose}
\end{align*}
denote the state trajectory of \eqref{eq:sys} generated by the control
input $u_{T}\in\real^{mT}$ and sampled at times $0$,
$T_{k_{1}},T_{k_{1}}+T_{k_{2}},\dots,T$. For notational convenience,
we write $x_{0:T}$ when $T_{k_{i}}=1$ for all $i$. The next result
provides a parameterization of all admissible pairs
$(u_{T},x_{k_{1},\dots,k_{\ell}})$ in terms of the data $\mc D$.

 
\begin{theorem}{\emph{\bfseries(Data-based representation of input and
      sampled state pairs)}}\label{thm: data-based representation}
  If
  $[X_{0,k_{i}}^{\transpose} \ U_{k_{i}}^{\transpose} ]^{\transpose}$
  is full row rank for all $i\in\until{\ell}$, then any pair
  $(u_{T},x_{k_{1},\dots,k_{\ell}})$ of input and sampled state
  trajectories of the system \eqref{eq:sys} satisfies
\begin{align}\label{eq: data-based representation}
	\begin{bmatrix} u_{T} \\ x_{k_{1},\dots,k_{\ell}}  \end{bmatrix} = \begin{bmatrix} G \\ H \end{bmatrix}\alpha, \quad \alpha\in\real^{q_{k_{\ell}}+\dots q_{k_{1}}+n},
\end{align}
where $q_{k_i}=\dim\,\Ker(X_{0,k_i})$, and
\begin{align}
	G & = \begin{bmatrix} \scriptstyle \tilde U_{{\ell}} & \scriptstyle 0 & \cdots & \scriptstyle 0 & \scriptstyle 0_{n} \\
	\scriptstyle 0 & \scriptstyle \tilde U_{{\ell-1}}  &  \ddots & \vdots & \vdots \\
	\vdots & \ddots &  \ddots & \scriptstyle  0 &  \scriptstyle 0_{n} \\
	\scriptstyle 0 & \cdots &\scriptstyle 0 &\scriptstyle \tilde U_{{1}} &\scriptstyle 0_{n}
\end{bmatrix}, \label{eq:G} \\
	H &  = \begin{bmatrix} \scriptstyle 0 &  \cdots & \scriptstyle 0 & \scriptstyle 0 & \scriptstyle I \\[0.1cm]
	\scriptstyle 0  &  \cdots  & \scriptstyle 0 & \scriptstyle \tilde X_{{1}}  &  \scriptstyle Q_{1}\\
	  \vdots & \scriptstyle \reflectbox{$\ddots$}  & \scriptstyle \tilde X_{{2}} & \scriptstyle Q_{2} \tilde X_{{1}}  & \scriptstyle Q_{2}Q_{1} \\ 
	\scriptstyle 0 & \scriptstyle \reflectbox{$\ddots$}  & \scriptstyle \vdots &  \scriptstyle \vdots &  \scriptstyle \vdots \\
	\scriptstyle \tilde X_{{\ell}}   & \cdots & \scriptstyle \prod\limits_{i=0}^{\ell-3}Q_{\ell-i} \tilde X_{{2}}  & \scriptstyle \prod\limits_{i=0}^{\ell-2}Q_{\ell-i}\tilde X_{{1}} &\!\!\! \scriptstyle \prod\limits_{i=0}^{\ell-1}Q_{\ell-i} \end{bmatrix}, \label{eq:H}
\end{align}
with $\tilde{U}_{i} = U_{k_i}K_{X_{0,k_{i}}}$, $\tilde{X}_{i} = X_{k_{i}}K_{X_{0,k_i}}$, 
and $Q_{i}=X_{k_{i}}K_{U_{k_{i}}}(X_{0,k_{i}}K_{U_{k_{i}}})^{\dag}$, for all $i\in\until{\ell}$.
\end{theorem}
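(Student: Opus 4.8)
The plan is to reduce the statement to two elementary facts about each individual data block and then propagate them across the $\ell$ segments by induction. First I would record the per-segment consequences of the identity $X_{k_j}=A^{T_{k_j}}X_{0,k_j}+C_{T_{k_j}}U_{k_j}$ that every data set $(U_{k_j},X_{0,k_j},X_{k_j})$ satisfies. Right-multiplying by $K_{X_{0,k_j}}$ (whose columns span $\Ker(X_{0,k_j})$) annihilates the $A^{T_{k_j}}X_{0,k_j}$ term and yields $\tilde X_j=C_{T_{k_j}}\tilde U_j$, so that $(\tilde U_j,\tilde X_j)$ is an input / final-state pair corresponding to the \emph{zero} initial state. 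Symmetrically, right-multiplying by $K_{U_{k_j}}$ annihilates the input term and gives $X_{k_j}K_{U_{k_j}}=A^{T_{k_j}}X_{0,k_j}K_{U_{k_j}}$, whence $Q_j=A^{T_{k_j}}(X_{0,k_j}K_{U_{k_j}})(X_{0,k_j}K_{U_{k_j}})^{\dag}=A^{T_{k_j}}P_j$, where $P_j$ is the orthogonal projector onto $\Image(X_{0,k_j}K_{U_{k_j}})$.

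Second, I would invoke the full row rank hypothesis on $[X_{0,k_j}^{\transpose}\ U_{k_j}^{\transpose}]^{\transpose}$, which makes this matrix surjective as a map $\real^{N_{k_j}}\to\real^{n+mT_{k_j}}$. Hitting targets of the form $(0,u)$ shows that $U_{k_j}$ restricted to $\Ker(X_{0,k_j})$ reaches every $u\in\real^{mT_{k_j}}$, i.e. $\tilde U_j$ has full row rank; hitting targets $(x,0)$ shows $X_{0,k_j}K_{U_{k_j}}$ has full row rank, so $P_j=I_n$ and therefore $Q_j=A^{T_{k_j}}$. These are exactly the two facts that let the data matrices substitute for the unknown $C_{T_{k_j}}$ and $A^{T_{k_j}}$.

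Third, I would read off the recursion encoded by $H$. Writing $\alpha=[\alpha_\ell^{\transpose},\dots,\alpha_1^{\transpose},\alpha_0^{\transpose}]^{\transpose}$ with $\alpha_j\in\real^{q_{k_j}}$ and $\alpha_0\in\real^n$, and setting $s_j:=x(\sum_{i\le j}T_{k_i})$, the block rows of $H$ assert $s_0=\alpha_0$ and $s_j=\tilde X_j\alpha_j+Q_j s_{j-1}$; the telescoping products $\prod_i Q_{\ell-i}$ in $H$ are precisely this recursion unrolled. For the claimed direction I take an arbitrary admissible pair, set $\alpha_0=x(0)$, and use surjectivity of each $\tilde U_j$ to choose $\alpha_j$ with $\tilde U_j\alpha_j$ equal to the segment-$j$ block $\bar u_j$ of $u_T$; then $G\alpha=u_T$ by construction. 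Substituting $\tilde X_j=C_{T_{k_j}}\tilde U_j$ and $Q_j=A^{T_{k_j}}$ turns the recursion into $s_j=C_{T_{k_j}}\bar u_j+A^{T_{k_j}}s_{j-1}$, which is the sampled dynamics of \eqref{eq:sys}; an induction on $j$ then gives $H\alpha=x_{k_1,\dots,k_\ell}$.

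The main obstacle is the second step: everything hinges on the full row rank hypothesis simultaneously forcing $P_j=I_n$ (so that $Q_j$ really equals $A^{T_{k_j}}$ and not a projected version of it) and making $\tilde U_j$ surjective (so that arbitrary segment inputs are reachable). Both consequences flow from the single surjectivity property of the stacked data matrix, applied to the two coordinate subspaces $\{(0,u)\}$ and $\{(x,0)\}$. The remaining bookkeeping — matching the reversed ordering of $u_T$ against the forward ordering of the sampled states, and verifying that the telescoping products coincide with the unrolled recursion — is routine.
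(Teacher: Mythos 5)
Your proposal is correct and follows essentially the same route as the paper's proof: you establish full row rank of each $\tilde U_j$ and of $X_{0,k_j}K_{U_{k_j}}$ from surjectivity of the stacked data matrix (the paper does this via the same $(0,u)$-type argument in a footnote), derive the key identities $\tilde X_j = C_{T_{k_j}}\tilde U_j$ and $Q_j = A^{T_{k_j}}$, and then match $u_T$ segment-wise while setting $\alpha_0 = x_0$ so that the unrolled state recursion reproduces $x_{k_1,\dots,k_\ell} = H\alpha$. The only cosmetic difference is that you make the projector $P_j = (X_{0,k_j}K_{U_{k_j}})(X_{0,k_j}K_{U_{k_j}})^{\dag}$ explicit before showing it equals $I_n$, and you phrase the final step as an induction on segments rather than a direct vectorization of the sampled evolution equation; both are the same argument.
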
\smallskip


\begin{proof}
  Note that, since
  $[X_{0,k_{i}}^{\transpose} \ U_{k_{i}}^{\transpose}]^{\transpose}$
  is full row rank for all $i\in\until{\ell}$,
  $\tilde{U}_{i}=U_{k_{i}}K_{X_{0,k_{i}}}$ is full row rank for all
  $i\in\until{\ell}$.\footnote{Indeed, since
    \smash{$[X_{0,k_{i}}^{\transpose} \ U_{k_{i}}^{\transpose}]^{\transpose}$}
    is full row rank, for all $u\in\real^{mT}$ there exists
    $\gamma\in\Ker(X_{0,k_{i}})$
    such that \smash{$[0 \ u^{\transpose}]^{\transpose} =
    [X_{0,k_{i}}^{\transpose} \
    U_{k_{i}}^{\transpose}]^{\transpose}\gamma$}, which implies that
    $U_{k_{i}}K_{X_{0},k_{i}}$ must be of full row~rank. } From
  \eqref{eq:G}, this implies that $G$ is full row rank, and,
  therefore, for every $T$-steps input sequence $u_{T}$ there exists a
  real vector $\alpha$ such that $u_{T}=G\alpha$. We next show that
  the sampled state $x_{k_{1},\dots,k_{\ell}}$ corresponding to the input $u_{T}=G\alpha$ can be expressed as $H\alpha$, with $H$
  as in \eqref{eq:H}. To this aim, let $C_{T_{i}}$ denote the
  $T_{i}$-steps controllability matrix of \eqref{eq:sys}, and observe
  that, for all $j\in\until{\ell}$,
  \begin{align}\label{eq:sampled evolution 1}
  	&x(T_{k_{1}}+\cdots+T_{k_{j}}) = A^{T_{k_{1}}+\cdots+T_{k_{j}}}x_0 +  \notag\\
	& \hspace{1cm} +A^{T_{k_{2}}+\cdots+T_{k_{j}}}C_{T_{k_{1}}} \tilde U_{1} \alpha_{1}+\cdots+C_{T_{k_{j}}}\tilde U_{j}\alpha_{j},
  \end{align}
   where we partitioned $\alpha$ as $\alpha=[\alpha_{\ell}^{\transpose},\alpha_{\ell-1}^{\transpose},\dots,\alpha_{1}^{\transpose},\alpha_{0}^{\transpose}]^{\transpose}$, with $\alpha_{i}\in\real^{q_{k_{i}}}$, and $\alpha_{0}\in\real^{n}$.
   Set $\alpha_{0}=x_0$. From 
   \begin{align*}
     \tilde{X}_{i} = X_{k_i}K_{X_0,k_i}	&= (A^{T_{k_i}}X_{0,k_i} + C_{T_{k_i}}U_{k_i})K_{X_0,k_i} \\
     			&= C_{T_{k_i}}U_{k_i}K_{X_0,k_i} = C_{T_{k_i}}\tilde U_{i},
   \end{align*}
   it follows that \eqref{eq:sampled evolution 1} can be rewritten as
  \begin{align}\label{eq:sampled evolution}
  	&x(T_{k_{1}}+\cdots+T_{k_{j}}) = A^{T_{k_{1}}+\cdots+T_{k_{j}}}\alpha_{0} \ + \notag\\
	&\hspace{2cm}+ A^{T_{k_{2}}+\cdots+T_{k_{j}}}\tilde X_{1} \alpha_{1}+\cdots+\tilde X_{j}\alpha_{j}.
  \end{align}
  Additionally, because $[X_{0,k_{i}}^{\transpose} \  U_{k_{i}}^{\transpose}]^{\transpose}$ is full row rank, $X_{0,k_{i}}K_{U_{k_{i}}}$ is full row rank, and from
  \begin{align*}
  	 X_{k_{i}}K_{U_{k_{i}}} &= (A^{T_{k_{i}}}X_{0,k_{i}} + C_{T_{k_{i}}}U_{k_{i}})K_{U_{k_{i}}} \\
	 &= A^{T_{k_{i}}}X_{0,k_{i}}K_{U_{k_{i}}}, 
  \end{align*}
  it follows that 
  \begin{align}\label{eq:Qk} 
  Q_{i}=X_{k_{i}}K_{U_{k_{i}}}(X_{0,k_{i}}K_{U_{k_{i}}})^{\dag}= A^{T_{k_{i}}}. 
  \end{align}
  Finally, by substituting \eqref{eq:Qk} into \eqref{eq:sampled evolution} and rewriting the latter in vector form, we obtain $x_{k_{1},\dots,k_{\ell}}=H\alpha$, with $H$
  as in~\eqref{eq:H}.~ 
\end{proof}


The previous result states that any $T$-steps input sequence and
corresponding state trajectory sampled at times $0$,
$T_{k_{1}},T_{k_{1}}+T_{k_{2}},\dots,T$ of the system \eqref{eq:sys}
can be written as a linear combination of the columns of a matrix that
depends on the dataset $\mc D$ only. Intuitively, this sampled
data-based representation is obtained by suitably ``gluing'' together
the data-based representations of system trajectories of lengths
$T_{k_1},T_{k_2},\dots,T_{k_\ell}$. One of the advantages of our
parameterization is that it
provides a data-based description of a linear system that does not
rely on the identification of the system matrices $A$ and $B$. {\color{black} Further, when the full state of the system is accessible, the data-based
  representation of Theorem \ref{thm:
    data-based representation} generalizes those employed in a number of recent works (e.g., \cite{CDP-PT:19,GRGS-ASB-CL-LC:19,JC-JL-FD:18}), which rely on measurements of a single, uninterrupted, and sufficiently long input-output trajectory.\footnote{A partial extension of this setting to
    multiple measured trajectories has been proposed in
    \cite{IM-JCW-PR-BLMD:05,JB-FA:19}, under the rather restrictive
    assumption that these trajectories align over a sufficiently long
    window at their intersection.}}
  To clarify the notation and implications of Theorem \ref{thm: data-based
  representation}, we next illustrate our result by means of~a~simple~example.

\begin{example}{\emph{\bfseries (Illustration of Theorem \ref{thm:
        data-based representation})}}
  Consider the scalar system
  \begin{align}\label{eq:scalar sys}
    x(t+1) = a x(t) + u(t), \quad a\in\real,
  \end{align}
  and assume that $M=1$, $N_1=3$, $T_1=2$, that is, data have been
  generated from three control experiments performed using a single
  time horizon of length two. Further, consider the following dataset
  $\mc D = \{(U_1, X_{0,1}, X_1)\}$, where
  \begin{align*}
    U_1 = 
    \begin{bmatrix} 
      0 & 1 & 0 \\ 0 & 0 & 1 
    \end{bmatrix}, 
                           \ X_{0,1} = 
                           \begin{bmatrix} 
                             1 & 0 & 0 
                           \end{bmatrix}, 
                                     \ X_1 = 
                                     \begin{bmatrix} 
                                       a^2 & 1 & a 
                                     \end{bmatrix}. 
  \end{align*}
  Notice that
  $[X_{0,1}^{\transpose} \ U_{1}^{\transpose} ]^{\transpose}$ has full
  row rank, and that
  \begin{align*}
  K_{U_{1}} =
    \begin{bmatrix} 
      1 \\ 0 \\ 0 
    \end{bmatrix},
    \  K_{X_{0,1}}=
    \begin{bmatrix} 
      0 & 0 \\ 1 & 0 \\ 0 & 1 
    \end{bmatrix},\  Q_1=a^2.
  \end{align*}
  Thus, by choosing $\ell=2$ and $k_1=k_2=1$, by Theorem \eqref{thm:
    data-based representation}, any input $u_T$ and resulting state
  sampled at time $0$, $T_1=2$, $T=2T_1=4$, $x_{0,2,4}$, of
  \eqref{eq:scalar sys} satisfy \eqref{eq: data-based
    representation},~where
  \begin{align*}
    G  =\!\left[
    \begin{array}{cc|cc|c} 
      1 & 0 & 0 & 0 & 0 \\ 
      0 & 1 & 0 & 0 & 0 \\ 
      \hline 0 & 0 & 1 & 0 & 0 \\ 
      0 & 0 & 0 & 1 & 0 
    \end{array}\right]\!,
                      \ H  = \! \left[
                      \begin{array}{cc|cc|c} 
                        0 & 0 & 0 \!\! &\!\! 0\! & 1 \\ 
                        \hline 0 & 0 & 1 \!\!& \!\!a \!& a^2 \\ 
                        \hline  1 & a &  a^2 \!&\! \!a^3\! & a^4 
                      \end{array}\right]\!.
  \end{align*}
  We note, in particular, that to compute the matrices $G$ and $H$, we
  did not reconstruct the system parameter $a$.~\oprocend
\end{example}


When the dataset $\mc D$ contains trajectories recorded using a
unit-length time horizon\footnote{We remark that a unit-length dataset
  can be constructed from measurements of a single trajectory by dividing the latter
  into unit-length~segments.}, we have the following immediate
corollary of Theorem \ref{thm: data-based representation}, which
provides a complete data-based parameterization of all input sequences
and corresponding state trajectories of the system \eqref{eq:sys}.

\begin{corollary} {\emph{\bfseries(Complete data-based representation
      of input and state pairs)}} \label{cor: representation} Assume
  that there exists an index $j\in\until{M}$ such that $T_{j}=1$.
  If
  $[X_{0,j}^{\transpose} \ U_{j}^{\transpose} ]^{\transpose}$
  is full row rank, then, for any $T\ge 1$, any pair of input $u_T$
  and corresponding state trajectory $x_{0:T}$ of the system
  \eqref{eq:sys} satisfies
\begin{align}\label{eq: data-based representation full}
	\begin{bmatrix} u_{T} \\ x_{0:T}  \end{bmatrix} = \begin{bmatrix} G \\ H \end{bmatrix}\alpha, \quad \alpha\in\real^{Tq_{j}+n},
\end{align}
where $G$ and $H$ are defined as in \eqref{eq:G} and \eqref{eq:H}, respectively, with $\ell=T$ and $k_{i}=j$ for all $i\in\until{\ell}$. 
\end{corollary}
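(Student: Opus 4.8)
The plan is to obtain this corollary as a direct specialization of Theorem \ref{thm: data-based representation}, rather than by a fresh argument. First I would fix the index sequence by setting $\ell = T$ and choosing $k_1 = k_2 = \dots = k_\ell = j$, so that every sub-horizon is the unit-length one, $T_{k_i} = T_j = 1$. With this choice the total horizon is $T = \sum_{i=1}^{\ell} T_{k_i} = \sum_{i=1}^{T} 1 = T$, which matches the prescribed control time, so the construction is self-consistent for any $T \ge 1$.

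Next I would check that the hypothesis of Theorem \ref{thm: data-based representation} is met. That theorem requires $[X_{0,k_i}^{\transpose} \ U_{k_i}^{\transpose}]^{\transpose}$ to be full row rank for every $i \in \until{\ell}$; since $k_i = j$ for all $i$, this condition collapses to the single requirement that $[X_{0,j}^{\transpose} \ U_{j}^{\transpose}]^{\transpose}$ be full row rank, which is precisely the standing assumption of the corollary. Hence Theorem \ref{thm: data-based representation} applies verbatim. I would then identify the sampled state with the full trajectory: because each $T_{k_i} = 1$, the sampling instants $0, T_{k_1}, T_{k_1}+T_{k_2}, \dots, T$ collapse to the consecutive times $0, 1, 2, \dots, T$, so that $x_{k_1,\dots,k_\ell}$ records the state at every step and therefore coincides with $x_{0:T}$, in accordance with the notation introduced before the theorem. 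Likewise, the free parameter dimension specializes to $q_{k_\ell} + \dots + q_{k_1} + n = T q_j + n$, since $q_{k_i} = \dim\,\Ker(X_{0,j}) = q_j$ for all $i$.

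Finally, substituting $\ell = T$ and $k_i = j$ into the matrices $G$ and $H$ of \eqref{eq:G} and \eqref{eq:H} and invoking the representation \eqref{eq: data-based representation} yields exactly \eqref{eq: data-based representation full}. I do not expect any genuine obstacle in this argument, as the corollary is a pure instantiation of the theorem; the only point deserving (minor) care is the observation that unit-length sampling returns the complete state trajectory, which amounts to unwinding the definition of $x_{0:T}$.
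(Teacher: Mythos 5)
Your proposal is correct and matches the paper's intent exactly: the paper presents this corollary as an immediate specialization of Theorem \ref{thm: data-based representation} (with $\ell = T$, $k_i = j$, so that unit-length sampling yields the full trajectory $x_{0:T}$ and the parameter dimension becomes $Tq_j + n$), and offers no separate proof precisely because the instantiation you spell out is the whole argument.
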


\section{Closed-form data-driven expressions of minimum-energy controls}\label{sec:min-en}



\subsection{Problem formulation}
For a control horizon $T\ge 1$ and desired initial and final states
$\subscr{x}{0}\in\real^n$ and $\subscr{x}{f}\in\real^n$, respectively,
the minimum-energy control problem asks for the input sequence $u_T\in\real^{mT}$
with minimum norm that steers the state of the system \eqref{eq:sys}
from $x_{0}$ to $\subscr{x}{f}$ in $T$ steps. Mathematically, this is
encoded in the solution of the following minimization problem:
\begin{align}\label{eq: min energy problem 1}
  \begin{array}{ll}
    \min\limits_{u_T} & \| u_T\|_2^2 ,\\[1em]
    \,\text{s.t.} 
         & x(t+1) = A x(t) + B u(t), \\[.5em]
         & x(0) =x_{0}, \ x(T) = \subscr{x}{f} .
  \end{array}
\end{align}
As a classic result \cite{TK:80}, the minimization problem \eqref{eq:
  min energy problem 1} is feasible if and only if $\subscr{x}{f}$ is
reachable in $T$-steps from $x_0$, or, equivalently, if and only if
$(\subscr{x}{f} - A^T x_0) \in \Image (C_T)$, where $C_T$ is the
$T$-steps controllability matrix of the system.  In this case, the
solution to \eqref{eq: min energy problem 1} is unique and can be
computed as
\begin{align}\label{eq: min energy input}
  u^*_T = C_T^\dag (\subscr{x}{f} - A^T x_0).
\end{align}
In the remaining of this section, we will derive closed-form
expressions of $u^*_T$ based on the dataset $\mc D$ without relying on
the identification of the system matrices $A$ and $B$. To this end, we
will make use of the following~assumptions:
\begin{enumerate}
\item[(A1)] The state $\subscr{x}{f}$ is reachable in $T$-steps from
  the state $x_0$.

\item[(A2)] The dataset $\mc D$ contains (possibly repeated) indices
  $k_{1},\dots,k_{\ell} \in \{1,\dots, M\}$ such that
  $\sum_{i=1}^{\ell} \!T_{k_{i}} = T$.
\end{enumerate}


\subsection{Data-driven expressions of minimum energy controls}
Let $k_{1},\dots,k_{\ell}\in\{1,\dots, M\}$ be such that
$\sum_{i=1}^{\ell} T_{k_{i}}=T$, and consider the following
minimization problem:
\begin{align}\label{eq: min energy problem}
  \begin{array}{lcl}
    &\min\limits_{\alpha} &  \| G \alpha \|_2^2 \\[1em]
    & \text{s.t.} 
                         & \begin{bmatrix} x_{0} \\ \subscr{x}{f}  \end{bmatrix} = \bar{H} \alpha,  
  \end{array}
\end{align}
where $\alpha\in \real^{q_{k_{\ell}}+\cdots+q_{k_{1}}+n}$ is the
optimization variable, $q_{k_i}=\dim\,\Ker(X_{0,k_i})$, $G$ is as in
\eqref{eq:G}, and $\bar H$ is the matrix comprising the first and last
(row) block of $H$ in \eqref{eq:H}, namely:
\begin{align}
  \bar H & = \begin{bmatrix} \scriptstyle 0 &  \cdots & \scriptstyle 0 & \scriptstyle 0& \scriptstyle I \\
	\scriptstyle \tilde X_{{\ell}}   & \cdots & \scriptstyle \prod\limits_{i=0}^{\ell-3}Q_{\ell-i} \tilde X_{{2}}  & \scriptstyle \prod\limits_{i=0}^{\ell-2}Q_{\ell-i}\tilde X_{{1}} &\!\!\! \scriptstyle \prod\limits_{i=0}^{\ell-1}Q_{\ell-i} \end{bmatrix}\!. \label{eq:bar H}
\end{align}
The next theorem shows that the solution to \eqref{eq: min energy
  problem} leads to a data-driven expression of the $T$-steps
minimum-energy control input from $x_0$ to $\subscr{x}{f}$ for the
system \eqref{eq:sys}.

\begin{theorem}{\emph{\bfseries(Data-driven minimum-energy
      controls)}}\label{thm: optimal controls}
  Assume that
  $[X_{0,k_{i}}^{\transpose} \ U_{k_{i}}^{\transpose}]^{\transpose}$
  is full row rank for all $i\in\until{\ell}$. The $T$-steps minimum-energy
  control input to drive the system \eqref{eq:sys} from $x_{0}$ to
  $\subscr{x}{f}$ can be expressed as
\begin{align}\label{eq: u data driven}
	u_T^{*} = (I - G K_{\bar H} (GK_{\bar H})^{\dag}) G{\bar H}^{\dag}\begin{bmatrix} x_{0} \\ \subscr{x}{f}  \end{bmatrix}.
\end{align}\smallskip
\end{theorem}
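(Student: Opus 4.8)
The plan is to recast the model-based minimum-energy problem \eqref{eq: min energy problem 1} as the purely data-based program \eqref{eq: min energy problem}, and then to solve the latter in closed form by a standard parameterize-and-project argument. The bridge between the two formulations is Theorem \ref{thm: data-based representation}: under the full-row-rank hypothesis, every admissible pair $(u_{T},x_{k_{1},\dots,k_{\ell}})$ is of the form $(G\alpha, H\alpha)$, and, conversely, each $\alpha$ produces a genuine trajectory of \eqref{eq:sys} with input $G\alpha$ and initial state $\alpha_{0}$. Since $\bar H$ collects precisely the first and last block rows of $H$, the identity $\bar H\alpha = [x(0)^{\transpose},\ x(T)^{\transpose}]^{\transpose}$ holds, while $\|G\alpha\|_{2}^{2} = \|u_{T}\|_{2}^{2}$. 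Hence the constraint of \eqref{eq: min energy problem} encodes exactly the endpoint conditions $x(0)=x_{0}$, $x(T)=\subscr{x}{f}$, and its objective is the control energy.

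Next I would establish the equivalence of the two programs. By (A1) the target $\subscr{x}{f}$ is $T$-reachable from $x_{0}$, so there is an admissible pair meeting the endpoint conditions; Theorem \ref{thm: data-based representation} then supplies an $\alpha$ with $\bar H\alpha = b$, where $b := [x_{0}^{\transpose},\ \subscr{x}{f}^{\transpose}]^{\transpose}$, proving feasibility (equivalently $b\in\Image(\bar H)$). Because (i) every feasible $\alpha$ of \eqref{eq: min energy problem} yields, via $u_{T}=G\alpha$, a feasible input of \eqref{eq: min energy problem 1}, and (ii) the true optimum $u_{T}^{*}$ of \eqref{eq: min energy problem 1} is itself representable as $G\alpha$ for some feasible $\alpha$, the two programs share the same optimal value, and any optimal $\alpha^{\star}$ yields $u_{T}^{\star}=G\alpha^{\star}$ equal to the unique minimizer \eqref{eq: min energy input}.

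It then remains to solve \eqref{eq: min energy problem} explicitly. Since $b\in\Image(\bar H)$, the feasible set is the affine subspace $\alpha = \bar H^{\dag} b + K_{\bar H}\beta$ with $\beta$ free, because $\Ker(\bar H) = \Image(K_{\bar H})$ and $\bar H\bar H^{\dag} b = b$. Substituting gives $G\alpha = G\bar H^{\dag} b + (GK_{\bar H})\beta$, so minimizing $\|G\alpha\|_{2}^{2}$ reduces to the unconstrained least-squares problem $\min_{\beta}\|(GK_{\bar H})\beta + G\bar H^{\dag} b\|_{2}^{2}$. Its minimal-residual identity yields $(GK_{\bar H})\beta^{\star} = -\,GK_{\bar H}(GK_{\bar H})^{\dag}\,G\bar H^{\dag} b$, whence $u_{T}^{\star} = G\alpha^{\star} = (I - GK_{\bar H}(GK_{\bar H})^{\dag})\,G\bar H^{\dag} b$, which is exactly \eqref{eq: u data driven}.

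I expect the main obstacle to be the equivalence step rather than the algebra: one must argue that \eqref{eq: min energy problem} introduces no spurious constraints and discards no feasible inputs, i.e., that the feasible $\alpha$'s are in exact correspondence with the endpoint-constrained trajectories. This hinges on the surjectivity of $\alpha\mapsto(G\alpha,H\alpha)$ onto admissible pairs (Theorem \ref{thm: data-based representation}, which in turn uses the full-row-rank hypothesis to render $G$ full row rank), together with feasibility from (A1)--(A2). A closing subtlety is that the least-squares projection formula must be invoked without assuming $GK_{\bar H}$ has full column rank; the Moore--Penrose pseudoinverse handles this, and guarantees that although $\alpha^{\star}$ and $\beta^{\star}$ need not be unique, the projection $GK_{\bar H}(GK_{\bar H})^{\dag}$, and hence $G\alpha^{\star}$, are, so that \eqref{eq: u data driven} is well-defined.
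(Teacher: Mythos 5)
Your proposal is correct and follows essentially the same route as the paper's proof: use Theorem \ref{thm: data-based representation} (with assumption (A1)) to show the minimum-energy input is representable as $G\alpha^{*}$ with $\bar H\alpha^{*}=[x_{0}^{\transpose}\ \subscr{x}{f}^{\transpose}]^{\transpose}$, argue equivalence with problem \eqref{eq: min energy problem}, and then solve that program in closed form via the kernel parameterization and pseudoinverse projection. The only difference is that you spell out the steps the paper compresses into ``by direct calculation'' (the affine parameterization $\alpha=\bar H^{\dag}b+K_{\bar H}\beta$ and the least-squares projection), and you make explicit the converse direction of the representation needed for the two programs to be equivalent, which the paper leaves implicit.
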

\begin{proof}
Since $[X_{0,k_{i}}^{\transpose} \  U_{k_{i}}^{\transpose}]^{\transpose}$ has full row rank for all $i\in\until{\ell}$ and $\subscr{x}{f}$ is reachable in $T$ steps from $x_{0}$ by assumption, Theorem \ref{thm: data-based representation} ensures that there exists a real vector $\alpha^{*}$ satisfying 
\begin{align*}
u_T^{*}=G\alpha^{*}\ \   \text{ and }\ \  \begin{bmatrix} x_{0} \\ \subscr{x}{f}  \end{bmatrix} = \bar{H} \alpha^{*}.
\end{align*}
Because the $T$-steps minimum-energy control input $u_T^{*}=G\alpha^{*}$ is unique, $\alpha^*$ is also a solution to problem \eqref{eq: min energy problem}, and its computation is equivalent to computing $u^*_T$. By direct calculation, any solution to problem \eqref{eq: min energy problem}~has~the~form
\[
	\alpha^{*}=(\bar H^{\dag} -  K_{\bar H} (GK_{\bar H})^{\dag}G{\bar H}^{\dag}) \begin{bmatrix} x_{0} \\ \subscr{x}{f}  \end{bmatrix} +g,
\] 
where $g$ is an arbitrary vector belonging to the kernel of $G$. 
Finally, by substituting the above expression of $\alpha^*$ in $u_T^{*}=G\alpha^{*}$, the data-driven expression \eqref{eq: u data driven} directly follows.~ 
\end{proof}

Theorem \ref{thm: optimal controls} exploits the solution to the optimization problem \eqref{eq: min energy problem} and the data-based representation of sampled system trajectories established in Theorem \ref{thm: data-based representation} to compute a closed-form data-driven expression of the minimum-energy input $u^*_T$ based on the dataset $\mc D$. Alternatively, a data-based expression of $u_T^*$ can be derived via estimation of the $T$-steps controllability matrix $C_T$ and matrix $A^T$, as we show~next.


\begin{theorem}{\emph{\bfseries(Alternative expression of data-driven
      minimum-energy controls)}}\label{thm: alternative optimal
    controls}
  Assume that
  $[X_{0,k_{i}}^{\transpose} \ U_{k_{i}}^{\transpose}]^{\transpose}$
  is full row rank for all $i\in\until{\ell}$. The $T$-steps minimum-energy
  input to drive \eqref{eq:sys} from $x_{0}$ to
  $\subscr{x}{f}$ can be expressed~as
\begin{align}\label{eq: alternative u data driven}
  u_T^{*} =  \hat{C}_{T}^{\dag}\begin{bmatrix}  -\prod\limits_{i=0}^{\ell-1}Q_{\ell-i}  & I \end{bmatrix} \begin{bmatrix} x_{0} \\ \subscr{x}{f}  \end{bmatrix},
\end{align}
where, for all $i\in\until{\ell}$,
\begin{align} \label{eq:Qi,Li}
  \begin{split}
    \hat C_{T} &= 
    \begin{bmatrix} L _{\ell}& Q_{\ell} L_{\ell-1}  & \cdots &
      \prod\limits_{i=0}^{\ell-2}Q_{\ell-i}L_{1} \end{bmatrix},\\
    Q_i &= X_{k_{i}}K_{U_{k_{i}}}(X_{0,k_{i}}K_{U_{k_{i}}})^{\dag},
    \text{ and }\\[0.125cm]
    L_{i} &= X_{k_{i}}K_{X_{0,k_{i}}}(U_{k_{i}}K_{X_{0,k_{i}}})^{\dag}.
  \end{split}
\end{align}

\end{theorem}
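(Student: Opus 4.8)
The plan is to show that the right-hand side of \eqref{eq: alternative u data driven} collapses to the classic model-based minimum-energy input \eqref{eq: min energy input}. The strategy rests on two system-theoretic identifications of the data matrices: $Q_i$ is a power of $A$, and $L_i$ is a controllability matrix.

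First I would recall from the proof of Theorem \ref{thm: data-based representation}, specifically \eqref{eq:Qk}, that $Q_i = A^{T_{k_i}}$. Hence $\prod_{i=0}^{\ell-1} Q_{\ell-i} = A^{T_{k_\ell}} \cdots A^{T_{k_1}} = A^{T}$, so the data-dependent factor in \eqref{eq: alternative u data driven} reduces to $\begin{bmatrix} -\prod_{i=0}^{\ell-1} Q_{\ell-i} & I \end{bmatrix}\begin{bmatrix} x_0 \\ \subscr{x}{f} \end{bmatrix} = \subscr{x}{f} - A^T x_0$. Next I would identify $L_i$ with $C_{T_{k_i}}$: the identity $X_{k_i} K_{X_{0,k_i}} = C_{T_{k_i}} U_{k_i} K_{X_{0,k_i}} = C_{T_{k_i}} \tilde U_i$ derived in the same proof, combined with the full-row-rankness of $\tilde U_i = U_{k_i} K_{X_{0,k_i}}$ (whence $\tilde U_i \tilde U_i^\dag = I$), yields $L_i = \tilde X_i \tilde U_i^\dag = C_{T_{k_i}} \tilde U_i \tilde U_i^\dag = C_{T_{k_i}}$.

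The main step is then to prove $\hat C_T = C_T$. Substituting $L_i = C_{T_{k_i}}$ and $Q_i = A^{T_{k_i}}$ into \eqref{eq:Qi,Li}, the block of $\hat C_T$ associated with segment $j$ becomes $A^{\sigma_j} C_{T_{k_j}} = \begin{bmatrix} A^{\sigma_j} B & A^{\sigma_j+1} B & \cdots & A^{\sigma_j+T_{k_j}-1} B \end{bmatrix}$, where $\sigma_j = T_{k_{j+1}} + \cdots + T_{k_\ell}$. As the segment index $j$ runs from $\ell$ (leftmost block) down to $1$, the exponent $\sigma_j$ takes the values $0, T_{k_\ell}, T_{k_\ell}+T_{k_{\ell-1}}, \dots$, so the concatenated blocks list the consecutive powers $B, AB, \dots, A^{T-1}B$ with neither gaps nor overlaps, i.e., they assemble exactly into $C_T = [B \ AB \ \cdots \ A^{T-1}B]$. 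Equivalently, one can note that both $\hat C_T$ and $C_T$ send the stacked input $u_T$ to $x(T) - A^T x(0)$: decomposing the controllability sum over the $\ell$ time segments and pulling out the appropriate power of $A$ from each reproduces the block structure of $\hat C_T$, and arbitrariness of $u_T$ forces equality.

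Combining these facts, \eqref{eq: alternative u data driven} becomes $u_T^* = C_T^\dag(\subscr{x}{f} - A^T x_0)$, which is exactly \eqref{eq: min energy input}; feasibility under Assumption (A1) guarantees this is the unique minimum-energy input. I expect the verification $\hat C_T = C_T$ to be the principal obstacle, as it demands careful bookkeeping of the powers of $A$ across the segments and a check that the reverse-time stacking conventions of the blocks line up.
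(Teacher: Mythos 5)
Your proposal is correct and follows essentially the same route as the paper: establish $Q_i = A^{T_{k_i}}$ and $L_i = C_{T_{k_i}}$ from the full-row-rank assumption (via $X_{k_i}K_{U_{k_i}} = A^{T_{k_i}}X_{0,k_i}K_{U_{k_i}}$ and $X_{k_i}K_{X_{0,k_i}} = C_{T_{k_i}}U_{k_i}K_{X_{0,k_i}}$), conclude $\hat C_T = C_T$ and $\prod_{i=0}^{\ell-1}Q_{\ell-i} = A^T$, and invoke the model-based formula \eqref{eq: min energy input} under Assumption (A1). The only difference is that you spell out the block-by-block verification that $\hat C_T = C_T$ (which the paper leaves implicit), and your bookkeeping of the exponents $\sigma_j$ and the reverse-time stacking is indeed correct.
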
\medskip
\begin{proof} Notice that
   \begin{align*}
     X_{k_{i}}K_{U_{k_{i}}} &= (A^{T_{k_{i}}}X_{0,k_{i}} + C_{T_{k_{i}}}U_{k_{i}})K_{U_{k_{i}}} \\
	 &= A^{T_{k_{i}}}X_{0,k_{i}}K_{U_{k_{i}}}. 
   \end{align*}
   Because $[X_{0,k_{i}}^{\transpose} \  U_{k_{i}}^{\transpose}]^{\transpose}$ has full row rank for all $i$, $X_{0,k_{i}}K_{U_{k_{i}}}$ has also full row rank for all $i$, so that it holds 
   \begin{align}\label{eq:Qi}
     Q_{i} &= X_{k_{i}}K_{U_{k_{i}}}(X_{0,k_{i}}K_{U_{k_{i}}})^{\dag}= A^{T_{k_{i}}}.  
   \end{align}
   Similarly, notice that
   \begin{align*}
  	  X_{k_i}K_{X_0,k_i}	&= (A^{T_{k_i}}X_{0,k_i} + C_{T_{k_i}}U_{k_i})K_{X_0,k_i}, \\
     			&= C_{T_{k_i}}U_{k_i}K_{X_0,k_i},
  \end{align*}
  and, because $U_{k_{i}}K_{X_{0,k_{i}}}$ has full row rank for all $i$, we have
  \begin{align}\label{eq:Li}
     L_{i} &= X_{k_{i}}K_{X_{0,k_{i}}}(U_{k_{i}}K_{X_{0,k_{i}}})^{\dag}= C_{T_{i}}. 
   \end{align}
   From \eqref{eq:Qi} and \eqref{eq:Li}, it follows that $\hat C_{T} =  C_{T}$ and $\prod_{i=0}^{\ell-1}Q_{\ell-i}=A^{T}$. Finally, since, by assumption, $\subscr{x}{f}$ is reachable in $T$ steps from $x_{0}$, the data-driven expression \eqref{eq: alternative u data driven} directly follows from the model-based expression \eqref{eq: min energy input}. 
\end{proof}

\begin{figure}
  \vspace{0.15cm}
  \includegraphics[width=.485\textwidth]{./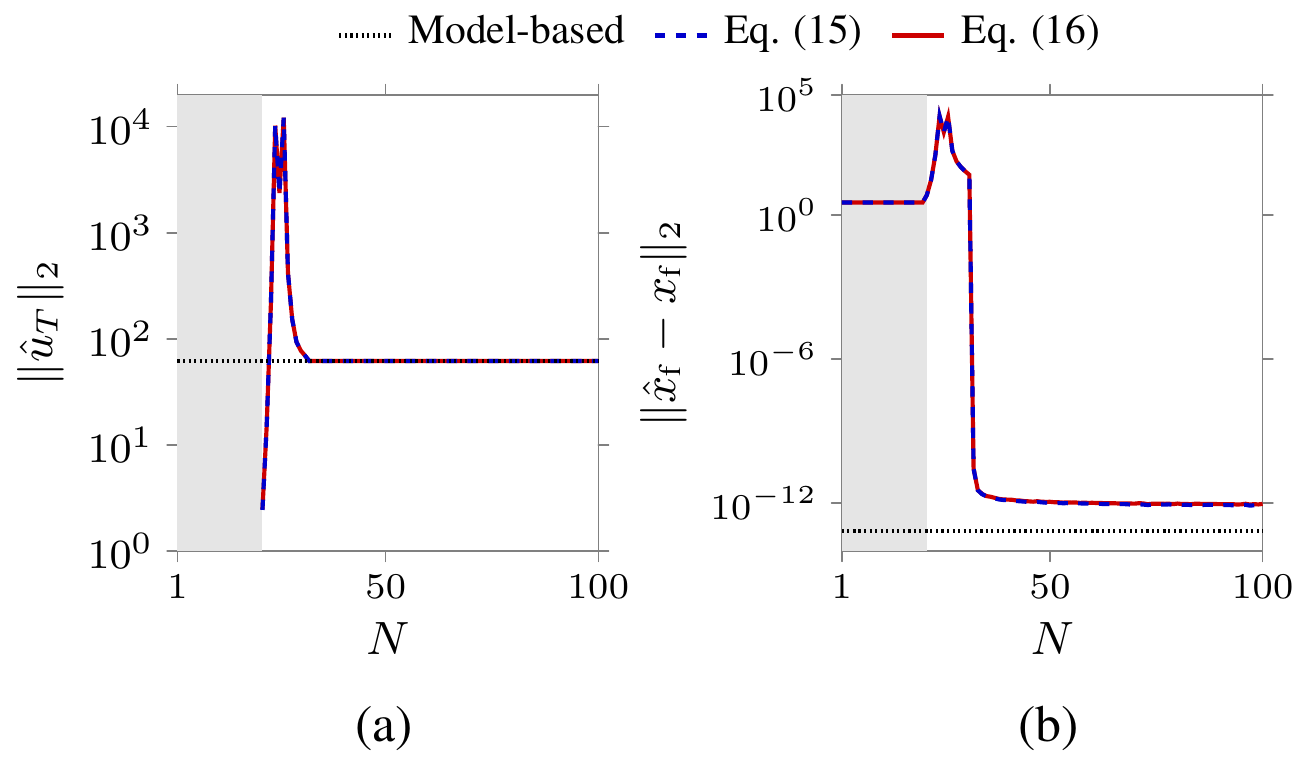}
  \vspace{-0.5cm}
  \caption{In this figure we compare the numerical performance of the model-based minimum-energy input \eqref{eq: min energy input} and the data-driven minimum-energy inputs \eqref{eq: u data driven} and \eqref{eq: alternative u data driven}. We choose a system of dimension $n=20$ with $m=2$ inputs. The system matrices $A$ and $B$ have been generated randomly with i.i.d.~normal entries.  Data have been divided into  $M=4$ datasets with time horizons $\smash{T_{i}=2+i}$, $i=1,\dots,M$. The $i$-th dataset, $i=1,\dots,M$, contains $N_{i}=N$ measurements. We choose a control horizon $\smash{T=\sum_{i=1}^{M}T_{i}=18}$.  The entries of the data matrices $X_{0,i}$ and $U_{i}$, initial state $x_{0}$, and final state $\subscr{x}{f}$ have been independently drawn from of a normal distribution. 
 The plots show the norm of the minimum-energy input (panel (a)) and the corresponding error in the final state (panel (b)) for the model-based expression \eqref{eq: min energy input} and the data-driven expressions \eqref{eq: u data driven} and \eqref{eq: alternative u data driven} as a function of the size of the datasets $N$. For the data-driven input \eqref{eq: u data driven} we replace $\smash{(G K_{\bar H})^\dag}$ with $\smash{(G K_{\bar H})^{\dag}_{\varepsilon}}$ where $\varepsilon=10^{-8}$ (cf.~Remark \ref{rem:numerics}). All curves concerning the data-driven strategies represent the average over 500 random realizations of the data matrices. In the gray regions the data-driven inputs are zero since the kernel of every matrix $X_{0,i}$ and~$U_{i}$~is~empty.} 
  \label{fig:numerical analysis}
\end{figure}

In Fig.~\ref{fig:numerical analysis} we compare the numerical
performance of the model-based input \eqref{eq: min energy input} and
our data-driven expressions \eqref{eq: u data driven} and \eqref{eq:
  alternative u data driven} for a system of dimension $n=20$, a number
of inputs $m=2$, and randomly generated data consisting of $M=4$
datasets featuring different time horizons. Each dataset contains an
identical number of data $N$. For values of $N$ in the gray region,
the kernel of every data matrix $X_{0,i}$ and $U_{i}$ is empty and,
therefore, the data-driven inputs \eqref{eq: u data driven} and
\eqref{eq: alternative u data driven} are zero. As soon as $N$ equals
the number of rows of the largest matrix
$[X_{0,k_{i}}^{\transpose} \ U_{k_{i}}^{\transpose}]^{\transpose}$
($N=32$ in the figure), the norm of the data-driven inputs reaches the
optimal one (Fig.~\ref{fig:numerical analysis}(a)), and the corresponding error in the final state rapidly
decays to zero (Fig.~\ref{fig:numerical analysis}(b)), in agreement with Theorems \ref{thm: optimal
  controls}~and~\ref{thm: alternative optimal controls}.


\begin{remark}{\emph{\bfseries(Minimum number of required
      experiments)}} Theorems \ref{thm: optimal controls} and
  \ref{thm: alternative optimal controls} provide exact data-driven
  expressions of the $T$-steps minimum-energy control input from
  $x_{0}$ to $\subscr{x}{f}$, under the assumption that the data
  matrix
  $[X_{0,k_{i}}^{\transpose} \ U_{k_{i}}^{\transpose}]^{\transpose}$
  is full row rank for all $i\in\until{\ell}$. For this condition to
  be satisfied, at least $N_{i}= T_{k_{i}}m+n$ experiments must be
  collected for each control time $T_{k_{i}}$. If there exists
  $j\in\until{M}$ such that $T_{k_{j}}=1$ (unit-length data), $m+n$
  measurements suffice to reconstruct the $T$-steps minimum-energy
  control input, for every horizon $T$. In this case, our expressions
  implicitly estimate the system matrices $A$ and $B$. Specifically,
  in \eqref{eq: u data driven} and \eqref{eq: alternative u data
    driven}, $Q_j=A$, and, in \eqref{eq: alternative u data driven},
  $L_j=B$. Hence, in this case, using our data-driven expressions or a
  sequential system identification and control design approach seem to
  be equivalent from a computational viewpoint.~\oprocend
\end{remark}


\begin{remark}{\emph{\bfseries(Numerical properties of  \eqref{eq: u data driven} and \eqref{eq: alternative u data driven})}}\label{rem:numerics}
  While the data-driven expression \eqref{eq: alternative u data
    driven} appears to be numerically stable (i.e., small numerical errors yield small deviations from the minimum-energy control), \eqref{eq: u data
    driven} suffers from numerical instabilities. Precisely, in the
  case of small numerical errors, the (row) rank of matrix $G K_{\bar H}$
  could become full, yielding $u_T^{*}=0$ in \eqref{eq: u data driven}
  regardless of the value of $x_0$ and $\subscr{x}{f}$. To remedy this
  situation, it is numerically convenient to replace
  $(G K_{\bar H})^\dag$ in \eqref{eq: u data driven} with
  $(G K_{\bar H})^{\dag}_{\varepsilon}$, where
  $(A)^{\dag}_{\varepsilon}$ denotes the Moore--Penrose pseudoinverse
  of $A$ that treats as zero the singular values of $A$ that are
  smaller than $\varepsilon>0$. 
  As a rule of thumb, $\varepsilon$ should be set to a value
  slightly larger than the expected magnitude of the
  numerical~errors.~\oprocend
\end{remark}

\section{Data-driven minimum-energy controls\\ with noisy data}\label{sec:noise}
In this section, we assume that the dataset $\mc D$ is corrupted by
additive i.i.d.~noise with known second-order
statistics. Specifically, for all $i\in\until{M}$, we consider
corrupted data matrices of the form
\begin{align}\label{eq:corrupted data}
  \begin{split}
    U_{i} &= \bar U_{i}+W_{U_{i}}, 
    X_{0,i} = \bar X_{0,i}+W_{X_{0,i}}, 
    X_{i} = \bar X_{i}+W_{X_{i}},
  \end{split}
\end{align}
where $\bar U_{i}$, $\bar X_{0,i}$, and $\bar X_{i}$ denote the true
data matrices, and the entries of $W_{U_{i}}$, $W_{X_{0,i}}$, and
$W_{X_{i}}$ are i.i.d.~random variables with zero mean and variance
$\sigma^{2}_{U_{i}}$, $\sigma^{2}_{X_{0,i}}$, and
$\sigma^{2}_{X_{i}}$, respectively. In this case, the data-driven
expressions \eqref{eq: u data driven} and \eqref{eq: alternative u
  data driven} are typically biased (see \cite[Remark 3]{GB-VK-FP:19}
for an explicit example in a simplified scenario), yielding incorrect
control inputs even when the number of data grows unbounded. In this
section, we will show that the effect of noise can be cancelled, in
the limit of infinite data, by suitably ``correcting'' these
expressions. Specifically, inspired by \cite{IM-RJV-SVH:06}, we will introduce correction terms that
compensate for the variance-dependent terms generated by the
pseudoinverse and kernel operations in \eqref{eq: u data driven} and
\eqref{eq: alternative u data driven}, leading to asymptotically
correct (or, equivalently, \emph{consistent}) data-driven
expressions.\footnote{To simplify the treatment without compromising
  the generality of the approach, in what follows we will assume
  $N_{i}=N$, $\smash{\sigma_{U}^{2}=\sigma^{2}_{U_{i}}}$,
  $\smash{\sigma_{X_{0}}^{2}=\sigma^{2}_{X_{0,i}}}$, and
  $\smash{\sigma_{X}^{2}=\sigma^{2}_{X_{i}}}$ for all
  $i\in\until{M}$.}

We consider first the data-driven expression \eqref{eq: alternative u
  data driven}, and rewrite the terms $Q_i$, $L_i$ in
\eqref{eq:Qi,Li}, respectively, as
\begin{align*}
  Q_i & = X_{k_i} \Pi_{U_{k_i}}  X_{0,k_i}^\transpose (X_{0,k_i} \Pi_{U_{k_i}}   X_{0,k_i}^\transpose)^{\dagger}, \\
  L_i & = X_{k_i}\Pi_{X_{0}} U_{k_i}^\transpose (U_{k_i}\Pi_{X_{0}}  U_{k_i}^\transpose )^{\dagger},
\end{align*}
where we used the identity
$A^{\dag}=A^{\transpose}(A A^{\transpose})^{\dag}$, and we replaced,
without loss of generality, every term $K_A K_A^\transpose$ with the
orthogonal projections onto $\Ker(A)$, $\Pi_A=I-A^\dag A$. Next, we
define the ``corrected'' versions of $Q_i$ and $L_i$ as
\begin{align*}
  Q_{i,c} & = X_{k_i} \Pi_{U_{k_i},c}  X_{0,k_i}^\transpose (X_{0,k_i} \Pi_{U_{k_i},c}   X_{0,k_i}^\transpose-N\sigma_{X_{0}}^{2}I)^{\dagger}, \\
  L_{i,c} & = X_{k_i}\Pi_{X_{0},c} U_{k_i}^\transpose (U_{k_i}\Pi_{X_{0},c}  U_{k_i}^\transpose -N\sigma_{U}^{2}I)^{\dagger},
\end{align*}
where
\smash{$\Pi_{X_{0,k_i},c} = I-X_{0,k_{i}}^{\transpose}(X_{0,k_{i}}X_{0,k_{i}}^{\transpose}-N\sigma_{X_{0}}^{2}I)^{\dag}X_{0,k_{i}}$}
and
\smash{$\Pi_{U_{k_i},c}  = I-U_{k_{i}}^{\transpose}(U_{k_{i}}U_{k_{i}}^{\transpose}-N\sigma_{U}^{2}I)^{\dag}U_{k_{i}}$}. With
these definitions in place, we introduce the following ``corrected''
expression of the data-driven control input \eqref{eq: alternative u
  data driven}:
\begin{align}\label{eq: alt u data driven corr}
	u_{T,c}'' = \hat{C}_{T,c}^{\dag}\begin{bmatrix}  -\prod\limits_{i=0}^{\ell-1}Q_{\ell-i,c}  & I \end{bmatrix}\begin{bmatrix} x_{0} \\ \subscr{x}{f} \end{bmatrix},
\end{align}
where $\hat C_{T,c}$ is defined as in \eqref{eq:Qi,Li}, after replacing all instances of $Q_i$ and $L_i$ with $Q_{i,c}$ and $L_{i,c}$, respectively. It is worth noting that, if only the matrices $X_{i}$ are affected by noise, then \eqref{eq: alt u data driven corr} coincides with \eqref{eq: alternative u data driven}, and no correction is~needed.
\begin{theorem}{\emph{\bfseries(Consistency of $u_{T,c}''$)}}\label{thm: consistency} Assume that the dataset $\mc D$ is corrupted by noise as in \eqref{eq:corrupted data}, and that $[\bar X_{0,k_{i}}^{\transpose} \ \bar U_{k_{i}}^{\transpose}]^{\transpose}$ is full row rank for all $i\in\until{\ell}$. The data-driven control $u_{T,c}''$ in \eqref{eq: alt u data driven corr} converges almost surely to the minimum-energy control input $u^*_T$ as $N\to\infty$.
\end{theorem}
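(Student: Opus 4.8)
The plan is to reduce the almost-sure convergence of $u_{T,c}''$ to the almost-sure convergence of its constituent estimators $Q_{i,c}$ and $L_{i,c}$ toward the true quantities, and then to propagate these limits through the algebraic operations defining $u_{T,c}''$. By Theorem~\ref{thm: alternative optimal controls} the noise-free versions of $Q_i$ and $L_i$ equal $A^{T_{k_i}}$ and $C_{T_{k_i}}$, so it suffices to prove that $Q_{i,c}\to A^{T_{k_i}}$ and $L_{i,c}\to C_{T_{k_i}}$ almost surely, and that the final map $(\{Q_{i,c}\},\{L_{i,c}\})\mapsto u_{T,c}''$ is continuous at the limit point. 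Throughout I would treat the columns of the true data matrices as i.i.d.\ samples (as in the setting of Fig.~\ref{fig:numerical analysis}), so that their empirical second moments converge, and take the three noise matrices to be mutually independent; the finitely many limits below then hold jointly on a common probability-one event.

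First I would apply the strong law of large numbers to the i.i.d.\ columns to identify the almost-sure limits of every $N$-normalized second-moment block. For the self-products one obtains $\tfrac1N U_{k_i}U_{k_i}^{\transpose}\to \bar\Sigma_{U}+\sigma_U^2 I$ and $\tfrac1N X_{0,k_i}X_{0,k_i}^{\transpose}\to \bar\Sigma_{X_0}+\sigma_{X_0}^2 I$, where $\bar\Sigma_{(\cdot)}$ denotes the limit of the corresponding true-data moment; the subtractive corrections $-N\sigma_U^2 I$ and $-N\sigma_{X_0}^2 I$ are precisely the terms cancelling these noise-induced biases, so that $\tfrac1N(U_{k_i}U_{k_i}^{\transpose}-N\sigma_U^2 I)\to\bar\Sigma_U$, and similarly for $X_{0,k_i}$. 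For the cross-products, mutual independence and zero mean of the noise make all noise contributions vanish, e.g.\ $\tfrac1N X_{k_i}U_{k_i}^{\transpose}$ converges to the limit of $\tfrac1N\bar X_{k_i}\bar U_{k_i}^{\transpose}$, so no correction is needed there. Substituting these limits into the sandwiched expressions for $Q_{i,c}$ and $L_{i,c}$, the corrected projectors $\Pi_{U_{k_i},c}$ and $\Pi_{X_0,c}$ reproduce, in the limit, the true orthogonal projectors onto $\Ker(\bar U_{k_i})$ and $\Ker(\bar X_{0,k_i})$.

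Next I would pass to the limit in $Q_{i,c}$ and $L_{i,c}$. The key point is that the matrices being pseudoinverted inside these estimators converge, after the $1/N$ normalization, to positive-definite limits, because the full-row-rank hypothesis guarantees that $\bar X_{0,k_i}K_{\bar U_{k_i}}$ and $\bar U_{k_i}K_{\bar X_{0,k_i}}$ have full row rank. Since the pseudoinverse is continuous at full-rank matrices, I would conclude $Q_{i,c}\to A^{T_{k_i}}$ and $L_{i,c}\to C_{T_{k_i}}$ almost surely, recovering the identities \eqref{eq:Qi}--\eqref{eq:Li} in the limit. Continuity of matrix multiplication and concatenation then yields $\prod_{i=0}^{\ell-1}Q_{\ell-i,c}\to A^{T}$ and $\hat C_{T,c}\to C_T$ almost surely, whence the vector $v_N:=\subscr{x}{f}-\prod_{i=0}^{\ell-1}Q_{\ell-i,c}\,x_0$ converges to $v:=\subscr{x}{f}-A^T x_0$, which lies in $\Image(C_T)$ by reachability assumption (A1).

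The main obstacle is the very last step, $\hat C_{T,c}^{\dag}v_N\to C_T^{\dag}v=u_T^*$, because the Moore--Penrose pseudoinverse is discontinuous exactly at rank-deficient matrices, and $C_T$ need not have full rank. Since $\hat C_{T,c}\to C_T$ and rank is lower semicontinuous, one has $\Rank(\hat C_{T,c})\ge\Rank(C_T)$ eventually; the crux is to secure the reverse inequality, i.e.\ that no spurious singular values survive. Here I would argue that $\Rank(\hat C_{T,c})=\Rank(C_T)$ for $N$ large enough---this is automatic when $C_T$ has full rank, and in the rank-deficient case it requires controlling the spurious singular values of $\hat C_{T,c}$ against $\|v_N-v\|$. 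Once this rank stabilization is in force, the classical characterization of pseudoinverse continuity (continuity holds precisely when the rank is locally constant) gives $\hat C_{T,c}^{\dag}\to C_T^{\dag}$, and combining this with $v_N\to v$ delivers the claimed almost-sure convergence of $u_{T,c}''$ to $u_T^*$.
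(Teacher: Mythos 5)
Your proposal takes essentially the same route as the paper's proof: invoke the strong law of large numbers for the $N$-normalized second-moment blocks $\tfrac{1}{N}X_{0,k_i}X_{0,k_i}^{\transpose}$, $\tfrac{1}{N}U_{k_i}U_{k_i}^{\transpose}$, $\tfrac{1}{N}X_{k_i}X_{0,k_i}^{\transpose}$, $\tfrac{1}{N}X_{k_i}U_{k_i}^{\transpose}$ (this is exactly \eqref{eq:LLN}), observe that the subtractive terms $-N\sigma_U^2 I$ and $-N\sigma_{X_0}^2 I$ cancel precisely the noise-induced bias while the cross-moments are asymptotically unbiased, rewrite $Q_{i,c}$ and $L_{i,c}$ as functions of these fixed-size blocks that are continuous at the limit point thanks to the full-row-rank hypothesis, and conclude $Q_{i,c}\to A^{T_{k_i}}$, $L_{i,c}\to C_{T_{k_i}}$ almost surely via the continuous mapping theorem. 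Your explicit assumption that the true-data empirical second moments converge (i.i.d.\ columns) is also needed, implicitly, by the paper: otherwise the ``limits'' $\bar\Delta_{i,j}$ in \eqref{eq:LLN} themselves depend on $N$.

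Where you depart from the paper is the final step, and the concern you raise there is legitimate but is not resolved by the paper either. The paper simply applies the continuous mapping theorem a second time to the map $(\{Q_{i,c}\},\{L_{i,c}\})\mapsto u_{T,c}''$; this presupposes that $M\mapsto M^{\dag}$ is continuous at $C_T$, which holds if and only if $C_T$ has full rank---for instance full row rank $n$, i.e., a controllable system with $mT\ge n$---and fails at rank-deficient $C_T$, exactly as you note. Moreover, in the rank-deficient case the difficulty is substantive rather than technical: the noise generically inflates $\Rank(\hat C_{T,c})$ above $\Rank(C_T)$, the spurious singular values of $\hat C_{T,c}$ are of the same stochastic order $O(1/\sqrt{N})$ as the error $\|v_N-v\|$ (both are driven by the same fluctuations of the $\Delta_{i,j}$), and hence the amplified components of $\hat C_{T,c}^{\dag}v_N$ along the spurious directions need not vanish; closing this either requires assuming $C_T$ full rank or a finer analysis of the correlation between the perturbations of $\hat C_{T,c}$ and of $v_N$. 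So the ``crux'' you flag and leave open is precisely the point that the paper's own proof sweeps under its second invocation of the continuous mapping theorem; every step the paper actually carries out, your proposal carries out as well, and under assumption (A1) strengthened to full-rank $C_T$ both arguments are complete.
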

\begin{proof} By the Strong Law of Large Numbers \cite[p.~6]{AWV00} and the assumption on the noise, as $N\to \infty$, we have
\begin{align}\label{eq:LLN}
\begin{split}
	&\Delta_{i,1}\!=\!\frac{1}{N} X_{0,k_{i}}X_{0,k_{i}}^{\transpose} \xrightarrow{\text{a.s.}}  \frac{1}{N}\bar{X}_{0,k_{i}}\bar{X}_{0,k_{i}}^{\transpose}\!+\!\sigma_{X_{0}}^{2}I=\bar\Delta_{i,1}, \\&
	\Delta_{i,2}\!=\!\frac{1}{N} U_{k_{i}}U_{k_{i}}^{\transpose} \xrightarrow{\text{a.s.}}  \frac{1}{N} \bar U_{k_{i}}\bar U_{k_{i}}^{\transpose} +\sigma_{U}^{2}I=\bar\Delta_{i,2}, \\
	&\Delta_{i,3}\!=\!\frac{1}{N}X_{k_{i}}X_{0,k_{i}}^{\transpose} \xrightarrow{\text{a.s.}}  \frac{1}{N}\bar X_{k_{i}}\bar X_{0,k_{i}}^{\transpose}=\bar\Delta_{i,3}, \\
	&\Delta_{i,4}\!=\!\frac{1}{N} X_{k_{i}}U_{k_{i}}^{\transpose} \xrightarrow{\text{a.s.}}  \frac{1}{N} \bar X_{k_{i}} \bar U_{k_{i}}^{\transpose}=\bar\Delta_{i,4},
\end{split}
\end{align}
where $\xrightarrow{\text{a.s.}}$ denotes almost sure convergence. Each matrix $Q_{i,c}$ can be written as a function of $\Delta_{i,j}$, $j=1,2,3$,~namely,
\begin{align*}
Q_{i,c}  = \ & (\Delta_{i,3}-\Delta_{i,3}(\Delta_{i,2}-\sigma_{U_{k_{i}}}^{2}I)^{\dag} \Delta_{i,3})\cdot\\
		&\cdot (\Delta_{i,1}-\sigma_{X_{0,k_{i}}}^{2}I+\Delta_{i,1}(\Delta_{i,2}-\sigma_{U_{k_{i}}}^{2}I)^{\dag} \Delta_{i,1})^{\dag}.
\end{align*}
Further, notice that $Q_{i,c}$ is continuous at  $\Delta_{i,j}=\bar\Delta_{i,j}$, $j=1,2,3$, since $[\bar X_{0,k_{i}}^{\transpose} \ \bar U_{k_{i}}^{\transpose}]^{\transpose}$ is full row rank by assumption. Thus, by \eqref{eq:LLN} and the Continuous Mapping Theorem \cite[Theorem 2.3]{AWV00}, as $N\to \infty$,
\begin{align}\label{eq:Qias}
	Q_{i,c}  \xrightarrow{\text{a.s.}} \bar{Q}_{i},
\end{align}
where $\bar Q_i = \bar X_{k_{i}}K_{\bar U_{k_{i}}}(\bar X_{0,k_{i}}K_{\bar U_{k_{i}}})^{\dag}$. 
Analogously, each $L_{i,c}$ can be written as
\begin{align*}
L_{i,c}  = \ & (\Delta_{i,4}-\Delta_{i,4}(\Delta_{i,1}-\sigma_{X_{0,k_{i}}}^{2}I)^{\dag} \Delta_{i,4})\cdot\\
		&\cdot (\Delta_{i,2}-\sigma_{U_{k_{i}}}^{2}I+\Delta_{i,2}(\Delta_{i,1}-\sigma_{X_{0,k_{i}}}^{2}I)^{\dag} \Delta_{i,2})^{\dag},
\end{align*}
and the same argument as before shows that, as $N\to \infty$,
\begin{align}\label{eq:Lias}
	L_{i,c}  \xrightarrow{\text{a.s.}} \bar{L}_{i},
\end{align}
where $\bar L_i = \bar X_{k_{i}}K_{\bar X_{0,k_{i}}}(\bar U_{k_{i}}K_{\bar X_{0,k_{i}}})^{\dag}$. 
Finally, by applying \eqref{eq:Qias}, \eqref{eq:Lias}, and, once again, the Continuous Mapping Theorem, we conclude that $u_{T,c}''\xrightarrow{\text{a.s.}} u_{T}^{*}$ as $N\to \infty$.
\end{proof}

Consider now the data-driven control \eqref{eq: u data driven}. After some algebraic manipulations, it can be rewritten~as
\begin{align}\label{eq: u data driven expansion}
	u^{*}_{T} & =\! (I - G \Pi_{\bar H} G^{\transpose}(G \Pi_{\bar H} G^{\transpose})^{\dag}) G\bar H^{\transpose} ({\bar H}\bar H^{\transpose})^{\dag}\! \! \begin{bmatrix} x_{0} \\ \subscr{x}{f}  \end{bmatrix}\!.
\end{align}
We introduce the following ``corrected'' version of \eqref{eq: u data driven expansion}:
\begin{align}\label{eq: u data driven corr}
	u_{T,c}' &\! =\! (I \!-\! (G_{c} \Pi_{\bar H,c} G_{c}^{\transpose}\!-\!N\sigma_{U}^{2}I)(G_{c} \Pi_{\bar H,c} G_{c}^{\transpose}\!-\!N\sigma_{U}^{2}I)_{\varepsilon}^{\dag}) \cdot \notag\\ 
	&\hspace{2.575cm} \cdot G_{c}\bar H_{c}^{\transpose} ({\bar H_{c}}\bar H_{c}^{\transpose}-\Delta_{\bar H})^{\dag}\! \! \begin{bmatrix} x_{0} \\ \subscr{x}{f}  \end{bmatrix}\!,
\end{align}
where $G_{c}$ and $\bar H_{c}$ are defined as $G$ and $\bar H$, after replacing all instances of $Q_i$ and $K_{X_{0,k_{i}}}$ with $Q_{i,c}$ and $\Pi_{X_{0,k_i},c}$, respectively, the operation $(\cdot)^{\dag}_{\varepsilon}$ is defined in Remark \ref{rem:numerics}, and
\begin{align*}
	\Pi_{\bar H,c} &= I-\bar H_{c}^{\transpose}(\bar H _{c}\bar H_{c}^{\transpose}-\Delta_{\bar H})^{\dag}\bar H_{c},\ \Delta_{\bar H} = \begin{bmatrix} 0_{n} & 0_{n} \\ 0_{n} & \Delta_{\bar H,2} \end{bmatrix},\\
	\Delta_{\bar H,2} & = N\sigma_{X}^{2} \sum_{j=0}^{\ell} \prod_{i=0}^{j-1}Q_{\ell-i,c}\left(\prod_{i=0}^{j-1}Q_{\ell-i,c}\right)^{\transpose}, \ Q_{0,c}=I.
\end{align*}
\begin{theorem}{\emph{\bfseries(Consistency of $u_{T,c}'$)}}\label{thm: consistency 2} Assume that $\mc D$ is corrupted by noise as in \eqref{eq:corrupted data}, and that $[\bar X_{0,k_{i}}^{\transpose} \ \bar U_{k_{i}}^{\transpose}]^{\transpose}$ is full row rank for all $i\in\until{\ell}$. For $\varepsilon>0$ sufficiently small, the data-driven control $u_{T,c}'$ in \eqref{eq: u data driven corr} converges almost surely to the minimum-energy control input $u^*_T$ as $N\to\infty$.
\end{theorem}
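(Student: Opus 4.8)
The plan is to follow the same three-step template as in the proof of Theorem~\ref{thm: consistency} — strong law of large numbers, identification of continuous maps, continuous mapping theorem — now applied to the more elaborate expression \eqref{eq: u data driven corr}. First I would invoke the Strong Law of Large Numbers exactly as in \eqref{eq:LLN}, augmented by the cross-moment $\tfrac{1}{N}U_{k_{i}}X_{0,k_{i}}^{\transpose}\xrightarrow{\text{a.s.}}\tfrac{1}{N}\bar U_{k_{i}}\bar X_{0,k_{i}}^{\transpose}$ (which carries no variance term because the noises on $U_{k_{i}}$ and $X_{0,k_{i}}$ are independent), to secure almost sure convergence of every normalized sample second-moment matrix to its variance-corrected expectation. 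All the constituents of \eqref{eq: u data driven corr} are fixed pseudoinverse/product functions of these moments.

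Next I would establish convergence of the corrected pieces. From the proof of Theorem~\ref{thm: consistency} we already have $Q_{i,c}\xrightarrow{\text{a.s.}}\bar Q_{i}$. Exactly as $Q_{i,c}$ was expressed there in terms of $\Delta_{i,1},\Delta_{i,2},\Delta_{i,3}$, I would rewrite each fixed-size block of the $1/N$-normalized products $G_{c}\bar H_{c}^{\transpose}$, $\bar H_{c}\bar H_{c}^{\transpose}$ and $G_{c}\Pi_{\bar H,c}G_{c}^{\transpose}$ as a continuous function of the moments $\Delta_{i,j}$ and of $Q_{i,c}$, in which the corrective terms $N\sigma_{X_{0}}^{2}I$, $N\sigma_{U}^{2}I$ and $\Delta_{\bar H}$ subtract off precisely the variance contributions. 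Continuity of these maps at the limit moments holds because $\bar X_{0,k_{i}}$ and $\bar U_{k_{i}}$ inherit full row rank from the assumption on $[\bar X_{0,k_{i}}^{\transpose}\ \bar U_{k_{i}}^{\transpose}]^{\transpose}$, so that the normalized matrices inside every pseudoinverse (e.g.\ $\Delta_{i,1}-\sigma_{X_{0}}^{2}I\xrightarrow{\text{a.s.}}\tfrac{1}{N}\bar X_{0,k_{i}}\bar X_{0,k_{i}}^{\transpose}$) converge to invertible limits. Invoking the Strong Law and the Continuous Mapping Theorem then gives, almost surely,
\begin{align*}
\tfrac{1}{N}\bigl(G_{c}\Pi_{\bar H,c}G_{c}^{\transpose}-N\sigma_{U}^{2}I\bigr)&\to\tfrac{1}{N}G\Pi_{\bar H}G^{\transpose},\quad \tfrac{1}{N}G_{c}\bar H_{c}^{\transpose}\to\tfrac{1}{N}G\bar H^{\transpose},\\
\tfrac{1}{N}\bigl(\bar H_{c}\bar H_{c}^{\transpose}-\Delta_{\bar H}\bigr)&\to\tfrac{1}{N}\bar H\bar H^{\transpose},
\end{align*}
the noiseless quantities appearing in \eqref{eq: u data driven expansion}.

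It then remains to pass the two pseudoinverse operations to the limit. The factor $G_{c}\bar H_{c}^{\transpose}(\bar H_{c}\bar H_{c}^{\transpose}-\Delta_{\bar H})^{\dag}[x_{0}^{\transpose}\ \subscr{x}{f}^{\transpose}]^{\transpose}$ converges even though $\bar H\bar H^{\transpose}$ may be rank deficient: by Theorem~\ref{thm: data-based representation} and the reachability assumption~(A1) the target $[x_{0}^{\transpose}\ \subscr{x}{f}^{\transpose}]^{\transpose}$ lies in $\Image(\bar H)=\Image(\bar H\bar H^{\transpose})$, while the spurious small-eigenvalue directions created by the noise align asymptotically with $\Ker(\bar H^{\transpose})$ and are therefore annihilated by the shared left factor $\bar H^{\transpose}$ in $G\bar H^{\transpose}$; hence the product reproduces $G\bar H^{\dag}[x_{0}^{\transpose}\ \subscr{x}{f}^{\transpose}]^{\transpose}$ in the limit, and no truncation is needed for this piece.

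The genuinely delicate step — and the reason the statement singles out the threshold $\varepsilon$ — is the $\varepsilon$-truncated pseudoinverse of the inner term. Here $G\Pi_{\bar H}G^{\transpose}=(GK_{\bar H})(GK_{\bar H})^{\transpose}$ is in general rank deficient, and the projector $G\Pi_{\bar H}G^{\transpose}(G\Pi_{\bar H}G^{\transpose})^{\dag}$ premultiplies $G\bar H^{\dag}[x_{0}^{\transpose}\ \subscr{x}{f}^{\transpose}]^{\transpose}$ with no factor available to cancel the noise-inflated directions; the plain pseudoinverse would then lift the rank to full and force $u_{T,c}'=0$ (cf.\ Remark~\ref{rem:numerics}). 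I would therefore analyse the singular values of $G_{c}\Pi_{\bar H,c}G_{c}^{\transpose}-N\sigma_{U}^{2}I$: the genuine ones grow at the natural $\Theta(N)$ rate of the corrected moments, whereas the ones corresponding to $\Ker(G\Pi_{\bar H}G^{\transpose})$ are driven only by the vanishing fluctuation of the noise-quadratic terms and are of smaller order. The crux is to show that $\varepsilon$ can be placed between these two groups — small relative to the growing genuine singular values yet above the stochastic fluctuations — so that $(G_{c}\Pi_{\bar H,c}G_{c}^{\transpose}-N\sigma_{U}^{2}I)_{\varepsilon}^{\dag}$ eventually discards exactly the spurious directions and reproduces $(G\Pi_{\bar H}G^{\transpose})^{\dag}$; a final application of the Continuous Mapping Theorem then delivers $u_{T,c}'\xrightarrow{\text{a.s.}}u_{T}^{*}$. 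I expect this separation-of-scales argument for $\varepsilon$ to be the main obstacle, since it is the only place where the rank deficiency is not neutralised by an algebraic cancellation and must instead be controlled quantitatively.
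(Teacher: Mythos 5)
The paper gives you nothing to compare against here: its "proof" of Theorem~\ref{thm: consistency 2} is a single sentence stating that it follows closely the proof of Theorem~\ref{thm: consistency} and is omitted. Your three-step template (strong law for the corrected sample moments, continuity at the limit, Continuous Mapping Theorem) is exactly that intended argument, and you correctly isolate the one point where the analogy with Theorem~\ref{thm: consistency} breaks down: there, every pseudo-inverted matrix converges (after $1/N$ normalization) to an \emph{invertible} limit, so pseudoinversion is continuous at the limit point; here the inner matrix has a limit that is \emph{always} rank deficient, since $\Image(G\Pi_{\bar H}G^{\transpose})=G\,\Ker(\bar H)=\Ker(C_T)$, whose orthogonal complement $\Image(C_T^{\transpose})$ is nontrivial. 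Identifying this as the reason for the $\varepsilon$-truncation is a genuine contribution beyond what the paper records.

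However, your decisive step is left as a conjecture, and as you have set it up (fixed $\varepsilon$ applied to the unnormalized matrix) it cannot be closed. The corrections $N\sigma_{U}^{2}I$ and $\Delta_{\bar H}$ subtract only the \emph{means} of the noise-quadratic and cross terms; the fluctuations about those means in $M_{c}=G_{c}\Pi_{\bar H,c}G_{c}^{\transpose}-N\sigma_{U}^{2}I$ are of CLT/LIL order $\sqrt{N}$, hence almost surely unbounded, not ``vanishing'' as your sketch assumes. Consequently, for any fixed $\varepsilon>0$ the spurious singular values exceed $\varepsilon$ infinitely often, at which times $I-M_{c}(M_{c})_{\varepsilon}^{\dag}$ discards those directions and $u_{T,c}'$ collapses toward zero; in the simplest case $n=m=T=1$, $M_{c}$ is a scalar with mean near zero and variance $\Theta(N)$, so $u_{T,c}'=0$ infinitely often while $u_{T}^{*}\neq 0$. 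The repair is to run your separation-of-scales argument on the normalized matrix: $\tfrac{1}{N}M_{c}\to M_{\infty}$ almost surely, with the genuine eigenvalues converging to the positive eigenvalues of $M_{\infty}$ and the spurious ones to zero, so any fixed $\varepsilon$ below the least positive eigenvalue of $M_{\infty}$ gives $(\tfrac{1}{N}M_{c})(\tfrac{1}{N}M_{c})_{\varepsilon}^{\dag}\to M_{\infty}M_{\infty}^{\dag}$ almost surely, and the CMT finishes the proof. Equivalently, the truncation level in \eqref{eq: u data driven corr} must scale like $N\varepsilon$; the theorem's fixed-$\varepsilon$ phrasing is only correct under this reading. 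The same normalization is needed to make your ``annihilation'' argument for $G_{c}\bar H_{c}^{\transpose}(\bar H_{c}\bar H_{c}^{\transpose}-\Delta_{\bar H})^{\dag}$ rigorous: an eigenvector $v$ of $\bar H_{c}\bar H_{c}^{\transpose}-\Delta_{\bar H}$ with small eigenvalue $\lambda$ satisfies $\|\bar H_{c}^{\transpose}v\|^{2}=\lambda+v^{\transpose}\Delta_{\bar H}v=\Theta(N)$, so it is \emph{not} nearly annihilated by $\bar H_{c}^{\transpose}$, and the ratio of the small component of $[x_{0}^{\transpose}\ \subscr{x}{f}^{\transpose}]^{\transpose}$ to $\lambda$ must be controlled by a quantitative perturbation bound rather than by alignment alone.
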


\begin{figure}
  \vspace{0.15cm}
  \includegraphics[width=.485\textwidth]{./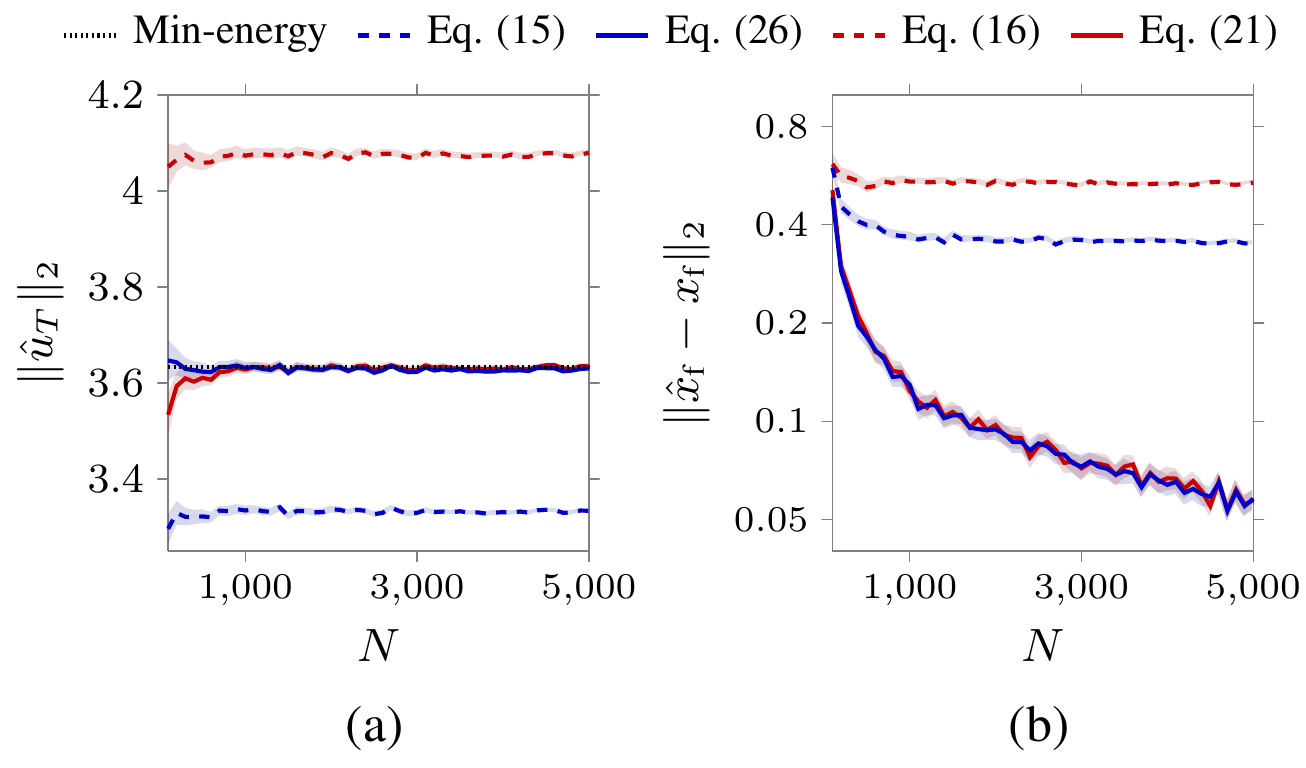}
  \vspace{-0.5cm}
\caption{In this figure we compare the behavior of the data-driven minimum-energy inputs \eqref{eq: u data driven} and \eqref{eq: alternative u data driven} and their corrected versions \eqref{eq: u data driven corr} and \eqref{eq: alt u data driven corr}, respectively. We choose a system of dimension $n=4$ with $m=2$ inputs. The system matrices $A$ and $B$ have been generated randomly with i.i.d.~normal entries.  Data have been divided into  $M=2$ datasets with time horizons $T_{1}=3$ and $T_{2}=4$ and $N_{1}=N_{2}=N$ measurements. We choose a control horizon $T=T_{1}+T_{2}=7$.  The entries of every $X_{0,i}$ and $U_{i}$ are independently and uniformly distributed in $[0,1]$. The entries of the initial state $x_{0}$ and final state $\subscr{x}{f}$ have been independently drawn from of a normal distribution. The plots show the norm of the minimum-energy input (panel (a)) and the corresponding error in the final state (panel (b)) for all the data-driven expressions as a function of the number of data $N$. For the data-driven inputs \eqref{eq: u data driven} (cf.~Remark \ref{rem:numerics}) and \eqref{eq: u data driven corr} we choose a tolerance $\varepsilon=10^{-8}$. The entries of all data matrices are corrupted by i.i.d.~Gaussian noise as in  \eqref{eq:corrupted data} with variance $\sigma_{X}^{2}=\sigma_{X_{0}}^{2}=\sigma_{U}^{2}=0.1$. {\color{black}The solid and dashed curves represent the average over 100 realizations of the noise, whereas the light-colored regions denote the 95\%~confidence~intervals~around~the~mean.} }
  \label{fig:numerical analysis noise}
\end{figure}

{\color{black} The proof of Theorem~\ref{thm: consistency 2} follows closely the one of Theorem \ref{thm: consistency} and is therefore omitted.} In Fig.~\ref{fig:numerical analysis noise}, we
illustrate the behavior of the data-driven expressions \eqref{eq: u
  data driven} and \eqref{eq: alternative u data driven}, and their
corrected versions \eqref{eq: u data driven corr} and \eqref{eq: alt u
  data driven corr}, respectively, as a function of the data size
$N$. 
Each dataset is corrupted
by i.i.d.~Gaussian noise as in \eqref{eq:corrupted data} with
$\sigma_{X}^{2}=\sigma_{X_{0}}^{2}=\sigma_{U}^{2}=0.01$. 
 As the number of data $N$ increases, the corrected data-driven
expressions \eqref{eq: u data driven corr} and \eqref{eq: alt u data
  driven corr} approach the minimum-energy cost
(Fig.~\ref{fig:numerical analysis noise}(a)) and the corresponding
errors in the final state decrease (Fig.~\ref{fig:numerical analysis
  noise}(b)), as predicted by Theorems \ref{thm: consistency} and
\ref{thm: consistency 2}.

\section{Conclusion}\label{sec:conc}
In this paper we address the problem of computing minimum-energy
controls for linear systems using heterogeneous data. Specifically, we
consider data consisting of input-state trajectories featuring
different time horizons and initial conditions. We derive two
different data-driven expressions of minimum-energy controls for a
wide range of control horizons, possibly different from those in the
experiments. When data are affected by i.i.d.~noise with zero mean and
known variance, we modify our expressions so to ensure
convergence to the correct controls in the limit of infinite~data. 

Directions for future work include the application of our
approach and data collection setting to other control problems, such as LQR and MPC, {\color{black} the sensitivity analysis of the corrected data-driven control inputs to
uncertainty in the noise variances, and the derivation of non-asymptotic bounds on the reconstruction error in the case of finite noisy data.}

\bibliographystyle{unsrt}
\bibliography{alias,FP,Main,New}

\end{document}